\newtheorem{theorem}{Theorem}
\newtheorem*{theorem*}{Theorem}
\newtheorem*{theorem-non}{Theorem}
\newtheorem*{proposition*}{Proposition}
\theoremstyle{definition}
\newtheorem*{definition}{Definition}
\newtheorem*{lemma*}{Lemma}
\newtheoremstyle{named}{}{}{\itshape}{}{\bfseries}{.}{.5em}{\thmnote{#3}}
\theoremstyle{named}
\newtheorem*{namedtheorem}{Lemma}
\begin{document}

\title{On fiber diameters of continuous maps}

\author{Peter S. Landweber}
\address{Department of Mathematics, Rutgers University, Piscataway, NJ 08854}
\email{landwebe@math.rutgers.edu}

\author{Emanuel A. Lazar}
\address{Department of Mathematics, University of Pennsylvania, Philadelphia, PA 19104}
\email{mlazar@math.upenn.edu}
\urladdr{www.math.upenn.edu/~mlazar}

\author{Neel Patel}
\address{Department of Mathematics, University of Pennsylvania, Philadelphia, PA 19104}
\email{neelpa@sas.upenn.edu}
\urladdr{www.math.upenn.edu/~neelpa}

\date{\today}

\begin{abstract}
We present a surprisingly short proof that for any continuous map $f : \mathbb{R}^n \rightarrow \mathbb{R}^m$, if $n>m$, then there exists no bound on the diameter of fibers of $f$.  Moreover, we show that when $m=1$, the union of small fibers of $f$ is bounded; when $m>1$, the union of small fibers need not be bounded.  Applications to data analysis are considered.
\end{abstract}

\maketitle

High-dimensional data sets are often difficult to analyze directly and, consequently, methods of simplifying them are important to modern data-intensive sciences.  Continuous mappings $f:\mathbb{R}^n \rightarrow \mathbb{R}^m$ are frequently used to reduce the dimension of large data sets.  Indeed, a classic result of Johnson and Lindenstrauss \cite{johnson1984extensions} shows that for $N$ points in any Euclidean space, there exists an injective Lipschitz function which maps these points into $\mathbb{R}^{O(\log N)}$ with minimal distortion in pairwise distances.  However, while continuous maps enjoy many desirable properties, the following suggests that a measure of caution should be exercised before employing them for high-dimensional data analysis.  We present a simple proof that for any continuous map $f : \mathbb{R}^n \rightarrow \mathbb{R}^m$, if $n>m$ then there exists no bound on the diameter of fibers of $f$.  Therefore, points can be arbitrarily far apart in $\mathbb{R}^n$, yet map to the same point under $f$.  

\begin{definition}
The \textbf{fibers} of a map $f:X \to Y$ are the preimages $f^{-1}(y) = \{x \in X : f(x)=y \}$ of points in $Y$.
\end{definition}
\begin{definition}
The \textbf{diameter} of a set $A$ is the supremum $\sup\{ d(x,y) : x,y \in A\}$, where $d(x,y)$ denotes the Euclidean distance between $x$ and $y$.  
\end{definition}
We begin by considering real-valued functions.

\begin{proposition*}
Let $f : \mathbb{R}^n \rightarrow \mathbb{R}$ be a continuous function where $n>1$.  Then for any $M > 0$, there exists $y\in \mathbb{R}$ whose fiber has diameter greater than $M$.
\label{m=1}
\end{proposition*}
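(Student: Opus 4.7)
The plan is to apply the Borsuk--Ulam theorem to a sufficiently large sphere in $\mathbb{R}^n$. Given $M > 0$, fix any $R > M/2$ and consider the restriction of $f$ to the sphere $S_R = \{x \in \mathbb{R}^n : \|x\| = R\}$. Since $n > 1$, this sphere has dimension $n - 1 \geq 1$, so Borsuk--Ulam applies to the continuous map $f|_{S_R} : S_R \to \mathbb{R}$ and yields a pair of antipodal points $p, -p \in S_R$ with $f(p) = f(-p) =: y$. Then $\{p, -p\} \subseteq f^{-1}(y)$, and the diameter of this fiber is at least $\|p - (-p)\| = 2R > M$, as required.

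If one prefers not to cite Borsuk--Ulam as a black box, the same conclusion for a codomain of dimension one can be derived from the intermediate value theorem alone. Pick any great circle $\gamma : [0, 2\pi] \to S_R$, which exists because $n \geq 2$ guarantees a $2$-plane through the origin, and satisfies $\gamma(\theta + \pi) = -\gamma(\theta)$. Define $h : [0, \pi] \to \mathbb{R}$ by $h(\theta) = f(\gamma(\theta)) - f(\gamma(\theta + \pi))$. Then $h(\pi) = -h(0)$, so either $h(0) = 0$ (in which case $p = \gamma(0)$ and $-p$ already lie in a common fiber), or $h$ changes sign and the intermediate value theorem furnishes a $\theta_0$ with $h(\theta_0) = 0$, giving antipodal points $\gamma(\theta_0), -\gamma(\theta_0)$ in the same fiber.

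There is essentially no obstacle: the only real step is recognizing that ``arbitrarily far apart in a common fiber'' is naturally produced by the antipodal conclusion of Borsuk--Ulam, and that we are free to inflate the ambient sphere. The hypothesis $n > 1$ enters exactly where needed, namely to guarantee that $S_R$ is at least one-dimensional so that either Borsuk--Ulam applies or a great circle exists on which to run the IVT argument.
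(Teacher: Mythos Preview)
Your argument is correct, but it differs from the paper's proof of this particular Proposition. The paper chooses three points $a,b,c\in\mathbb{R}^n$ at pairwise distance $2M$, orders their images as $f(a)<f(b)<f(c)$, and applies the intermediate value theorem on the \emph{straight segment} $\overline{ac}$ to locate a point $x$ with $f(x)=f(b)$; since every point of $\overline{ac}$ lies more than $M$ from $b$, the fiber through $b$ has diameter exceeding $M$. Your route instead inflates a sphere and invokes Borsuk--Ulam (or its one-dimensional IVT avatar on a great circle) to produce antipodal points in a common fiber. The paper's triangle argument is marginally more elementary---it needs only a line segment, not a circle---but it is tailored to the codomain $\mathbb{R}$ and does not extend to $m>1$. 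Your approach, by contrast, is exactly how the paper proves the general Theorem for maps into $\mathbb{R}^m$, so you have effectively skipped ahead and specialized that argument back to $m=1$.
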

\begin{proof}
Assume that some $M>0$ bounds all fiber diameters.  Consider three points $a, b, c \in \mathbb R^n$ such that the distance between any two is $2M$, as in Figure \ref{drawing}.  As $M$ bounds the fiber diameters, $f(a)$, $f(b)$, and $f(c)$ must be distinct; without loss of generality, suppose $f(a) < f(b) < f(c)$.  By the intermediate value theorem, the line segment $\overline{ac}$ contains a point $x$ such that $f(x) = f(b)$.  As the distance from $b$ to any point on $\overline{ac}$ is greater than $M$, the fiber containing $b$ must have diameter greater than $M$, contradicting our assumption that $M$ bounds all fiber diameters.
\end{proof}

\begin{figure}
\begin{center}
\includegraphics[width=0.23\linewidth]{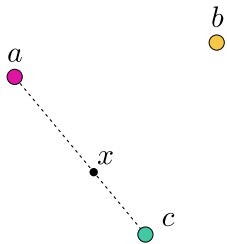}
\caption{Three points in $\mathbb{R}^n$ such that the distance between each pair is $2M$.  If $f(a)<f(b)<f(c)$, then by the intermediate value theorem there exists a point $x$ on the segment connecting $a$ to $c$ such that $f(a)<f(x)=f(b)<f(c)$, and so $x$ and $b$ belong to the same fiber.  Since the distance from $x$ to $b$ is greater than $M$, $M$ cannot bound all fiber diameters.}
\label{drawing}
\end{center}
\end{figure}

The intermediate value theorem plays a central role in the proof above, and will turn up again several times in what follows.  A generalization of this proposition can be established using the Borsuk-Ulam theorem \cite{borsuk1933drei}, a result about continuous mappings from an $n$-sphere $S^n$ to $\mathbb{R}^n$.
\begin{theorem*}[Borsuk-Ulam, 1933]
For every continuous map $f : S^n \rightarrow \mathbb{R}^n$ there exist antipodal points $x, -x \in S^n$ such that $f(x) = f(-x)$. 
\end{theorem*}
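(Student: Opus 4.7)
The plan is to prove Borsuk--Ulam by contradiction, reducing it to a purely topological statement about maps between spheres of different dimensions. Suppose no antipodal pair is collapsed, so $f(x)\ne f(-x)$ for every $x\in S^n$. Then the formula
\[
g(x)=\frac{f(x)-f(-x)}{\|f(x)-f(-x)\|}
\]
defines a continuous antipode-preserving (``odd'') map $g:S^n\to S^{n-1}$, meaning $g(-x)=-g(x)$. The entire problem thus reduces to showing the equivalent statement: \emph{there is no continuous odd map $S^n\to S^{n-1}$.}

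To rule out such a $g$, I would use degree theory. The central claim is that every continuous odd self-map $h:S^n\to S^n$ has odd degree. Granting this, compose $g$ with the equatorial inclusion $\iota:S^{n-1}\hookrightarrow S^n$ to obtain an odd self-map $\iota\circ g$ of $S^n$ whose image lies in the equator and therefore misses both poles. Any continuous map into $S^n$ that omits a point factors through the contractible space $S^n\setminus\{\text{pt}\}\cong\mathbb{R}^n$, so $\iota\circ g$ is null-homotopic and has degree zero, contradicting the odd-degree claim.

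The main obstacle is establishing that odd self-maps of $S^n$ have odd degree; this is where genuine algebraic-topological input is unavoidable. I would proceed by induction on $n$. For the base case $n=1$, an odd continuous map $S^1\to S^1$ lifts to $\tilde h:\mathbb{R}\to\mathbb{R}$, and the oddness condition forces $\tilde h(\theta+\pi)-\tilde h(\theta)$ to be an odd multiple of $\pi$; summing across the two half-periods shows the degree of $h$ equals this odd integer. For the inductive step, one perturbs $h$ smoothly (invoking Sard's theorem) so that its restriction to the equator $S^{n-1}$ avoids a regular value, and then relates the degree of $h$ to that of the restriction via a Mayer--Vietoris or excision computation over the two hemispheres, where the inductive hypothesis applies. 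This inductive step, not the initial reduction to a statement about odd maps, is the genuinely delicate part of the argument; the reduction itself is essentially formal once one has settled on the map $g$.
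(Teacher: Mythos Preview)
The paper does not prove the Borsuk--Ulam theorem at all: it simply states the result, attributes it to Borsuk's 1933 paper, and then invokes it as a black box in the two-line proof of Theorem~1. There is therefore no proof in the paper to compare your proposal against.

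That said, your outline is a standard and essentially sound route to Borsuk--Ulam. The reduction via $g(x)=(f(x)-f(-x))/\|f(x)-f(-x)\|$ to the nonexistence of an odd map $S^n\to S^{n-1}$ is correct, and your contradiction via $\iota\circ g$ (odd, yet null-homotopic because it misses a point) is valid once one knows odd self-maps of spheres have odd degree. A slightly cleaner variant avoids the inclusion $\iota$: restrict $g$ to the equatorial $S^{n-1}\subset S^n$ to get an odd self-map of $S^{n-1}$, which by the odd-degree claim is not null-homotopic; but it extends over the upper hemisphere (since $g$ is defined on all of $S^n$), so it \emph{is} null-homotopic---contradiction. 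Your inductive sketch for the odd-degree theorem is the genuinely nontrivial part, as you acknowledge; the Sard/Mayer--Vietoris route you describe can be made to work but is somewhat nonstandard, and most textbook treatments instead pass to $\mathbb{RP}^n$ and compute with $\mathbb{Z}/2$ cohomology, or argue combinatorially via Tucker's lemma. In any case, since the paper treats Borsuk--Ulam as a citation rather than something to be proved, your proposal goes well beyond what the paper itself supplies.
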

The Borsuk-Ulam theorem holds for $n$-dimensional spheres of any radius centered at the origin, and also for spheres centered at any point in a Euclidean space of dimension greater than $n$.  In the latter case, the antipodal map is the symmetry about the center of the sphere.  We can now show the following.
\begin{theorem}
Let $f : \mathbb{R}^n \rightarrow \mathbb{R}^m$ be a continuous map where $n>m$.  Then for any $M > 0$, there exists $y\in \mathbb{R}^m$ whose fiber has diameter greater than $M$.
\label{first theorem}
\end{theorem}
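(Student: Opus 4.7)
The plan is to argue by contradiction, just as in the $m=1$ case, but replacing the intermediate value theorem with the Borsuk--Ulam theorem stated just above. Suppose for contradiction that some $M > 0$ bounds the diameter of every fiber of $f$.

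Next I would exploit the dimension slack $n > m$ by choosing a round $m$-sphere of large radius sitting inside $\mathbb{R}^n$. Concretely, since $n \geq m+1$, fix any $(m+1)$-dimensional linear subspace $V \subset \mathbb{R}^n$ and, inside $V$, take the sphere $S$ of radius $R$ centered at the origin, where $R$ is any number strictly greater than $M/2$. Then $S$ is homeomorphic to $S^m$, and the restriction $f|_S \colon S \to \mathbb{R}^m$ is a continuous map from an $m$-sphere to $\mathbb{R}^m$, which is exactly the hypothesis of Borsuk--Ulam.

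Applying Borsuk--Ulam to $f|_S$ produces antipodal points $p, -p \in S$ with $f(p) = f(-p)$. These two points lie in the same fiber of $f$, yet the Euclidean distance between them is $2R > M$. Hence the diameter of that fiber exceeds $M$, contradicting the assumption that $M$ bounds all fiber diameters and completing the proof.

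There is no real obstacle once one notices that the hypothesis $n > m$ is precisely what is needed to accommodate an $m$-sphere of arbitrary radius in the domain; the Borsuk--Ulam theorem then does all the work that the intermediate value theorem did in the proposition for $m=1$. The only minor point worth being explicit about is that Borsuk--Ulam applies equally well to a sphere of any radius and in any ambient Euclidean space of sufficiently large dimension, as noted in the remark following the statement of Borsuk--Ulam in the excerpt.
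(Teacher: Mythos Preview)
Your argument is correct and essentially identical to the paper's: both embed an $m$-sphere of suitable radius in $\mathbb{R}^n$ and apply Borsuk--Ulam to obtain antipodal points in a common fiber. The only cosmetic difference is that the paper proceeds directly (taking radius $M$ to get a fiber of diameter $2M$) rather than by contradiction, so your contradiction framing is unnecessary but harmless.
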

\begin{proof}
Consider an $m$-sphere $S^m \subset \mathbb{R}^n$ with radius $M$ and centered at the origin.  By Borsuk-Ulam, there are antipodal points $x, -x \in S^m$ such that $f(x) = f(-x)$.  The points $x$ and $-x$ lie in the same fiber of $f$ and $d(x, -x) = 2M$, so this fiber has diameter at least $2M$.
\end{proof}

An analogous result will hold for any domain $X \subseteq \mathbb R^n$ in which we can isometrically embed $m$-spheres of arbitrarily large diameter and any co-domain $Y \subseteq \mathbb R^m$, where $X$ and $Y$ are endowed with the subspace topology.

\section*{Small Fibers}
At this point we consider the union of all small fibers.  For the remainder of the paper, we consider an arbitrarily chosen but fixed $M>0$.
\begin{definition}
We call a nonempty fiber {\bf small} if its diameter is less than $M$.  
\end{definition}

\begin{namedtheorem}[Small fiber lemma]
Let $f : \mathbb{R}^n \rightarrow \mathbb{R}$ be a continuous map where $n>1$.  Given three points $a,b,c \in \mathbb{R}^n$ such that the distance between each pair is at least $M$, no more than two belong to small fibers of $f$.  
\label{thelemma}
\end{namedtheorem}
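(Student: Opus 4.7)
The plan is to argue by contradiction, much as in the proof of the earlier proposition, but replacing the segment $\overline{ac}$ by a more carefully chosen path. Suppose all three of $a,b,c$ lie in small fibers. Then the values $f(a), f(b), f(c)$ must be pairwise distinct: if, say, $f(a) = f(b)$, then $a$ and $b$ would share a fiber of diameter at least $d(a,b) \geq M$, contradicting smallness. Relabel the points so that $f(a) < f(b) < f(c)$.

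The first thing I would try is to apply the intermediate value theorem directly to $f$ restricted to the segment $\overline{ac}$, producing a point $x \in \overline{ac}$ with $f(x) = f(b)$ and hoping that $d(x,b) \geq M$. I expect this to be the main obstacle, and the reason the previous argument does not transfer verbatim: in the earlier proposition the pairwise distances were $2M$, so every point on $\overline{ac}$ was automatically more than $M$ away from $b$, whereas here the distances are only $\geq M$. In the collinear configuration with $b$ between $a$ and $c$, for instance, $b$ itself lies on $\overline{ac}$, and the IVT point $x$ could be $b$, giving no contradiction.

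To get around this, I would replace the segment $\overline{ac}$ with a path $\gamma$ from $a$ to $c$ that avoids the open ball $B$ of radius $M$ centered at $b$. Since $d(a,b), d(c,b) \geq M$, both endpoints lie in $\mathbb{R}^n \setminus B$, and because $n>1$ this complement is path-connected, so such a $\gamma$ exists. Applying the intermediate value theorem to $f \circ \gamma$ then yields a point $y \in \gamma$ with $f(y) = f(b)$. By construction $d(y, b) \geq M$, yet $y$ lies in the fiber of $b$, so this fiber has diameter at least $M$ --- contradicting the assumption that $b$ lies in a small fiber. This forces at least one of $a, b, c$ to lie in a fiber that is not small, proving the lemma. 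The role of the hypothesis $n>1$ is exactly to make $\mathbb{R}^n \setminus B$ path-connected, which is what lets us route around $b$.
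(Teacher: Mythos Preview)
Your proof is correct and follows essentially the same route as the paper: assume all three points lie in small fibers, deduce the values are distinct, order them, use the path-connectedness of the complement of the open $M$-ball about the middle point (which is where $n>1$ enters), and apply the intermediate value theorem to reach a contradiction. Your additional commentary explaining why the straight segment $\overline{ac}$ does not suffice here is a nice expository touch not present in the paper.
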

\begin{proof}
Assume that $a,b,c \in \mathbb{R}^n$ all belong to small fibers.  If the distance between each pair is at least $M$, then $f(a)$, $f(b)$, and $f(c)$ must be distinct; without loss of generality, suppose $f(a) < f(b) < f(c)$.  Since the complement in $\mathbb{R}^n$ of the open ball of radius $M$ centered at $b$ is path-connected, being homeomorphic to the product $S^{n-1} \times [M, \infty)$, a curve can be drawn from $a$ to $c$ such that the distance from $b$ to every point along the curve is at least $M$.  However, by the intermediate value theorem, for some point $x$ on the curve, $f(a) < f(x) = f(b) < f(c)$.  The fiber containing $b$ therefore has diameter at least $M$, and so cannot be small.    
\end{proof}

We now show that the small fiber lemma places restrictions on the union of small fibers for real-valued functions.
\begin{theorem}
Let $f : \mathbb{R}^n \rightarrow \mathbb{R}^m$ be a continuous map where $n>m$.  When $m=1$, the union of small fibers is bounded; when $m>1$ the union of small fibers can be unbounded.
\label{second theorem}
\end{theorem}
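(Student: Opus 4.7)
The plan splits naturally according to the dichotomy in the statement.

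For the $m=1$ case, I would argue by contradiction using the small fiber lemma directly. Suppose the union $U$ of small fibers is unbounded. The goal is to extract three points $a,b,c \in U$ that are pairwise at distance at least $M$, contradicting the small fiber lemma. I would build them greedily: fix any $a \in U$; since $U$ is unbounded and $B(a,M)$ is bounded, $U \not\subseteq B(a,M)$, so there exists $b \in U$ with $d(a,b) \geq M$; likewise $B(a,M) \cup B(b,M)$ is bounded, so $U \not\subseteq B(a,M) \cup B(b,M)$, yielding a $c \in U$ with $d(c,a), d(c,b) \geq M$. These three points then contradict the small fiber lemma. There is no real obstacle here; the only thing to check is the easy fact that an unbounded subset of $\mathbb{R}^n$ cannot lie in a finite union of balls of finite radius.

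For the $m \geq 2$ case, I would exhibit an explicit counterexample. The key observation is that once $m \geq 2$, we have an output coordinate free to use as a "location parameter" while another output coordinate controls a small fiber in the remaining $\mathbb{R}^{n-m+1}$ factor. Concretely, I would define $f : \mathbb{R}^n \to \mathbb{R}^m$ by
\[
f(x_1, \dots, x_n) = \bigl(x_1, \dots, x_{m-1},\ x_m^2 + x_{m+1}^2 + \cdots + x_n^2\bigr).
\]
The fiber over $(u_1, \dots, u_{m-1}, v)$ is the point $(u_1, \dots, u_{m-1})$ crossed with the sphere $\{x_m^2+\cdots+x_n^2 = v\}$ in $\mathbb{R}^{n-m+1}$. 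For $0 \le v < M^2/4$ this sphere has diameter $2\sqrt{v} < M$, so the whole fiber has diameter less than $M$ regardless of the values of $u_1, \dots, u_{m-1}$.

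Consequently the union of small fibers contains the slab $\{x \in \mathbb{R}^n : x_m^2 + \cdots + x_n^2 < M^2/4\}$, whose first $m-1$ coordinates range over all of $\mathbb{R}^{m-1}$; this set is manifestly unbounded, completing the construction. The main subtlety is just choosing the example so that the small fibers genuinely are small \emph{as subsets of $\mathbb{R}^n$}, not merely in the fiber coordinates — the product structure above makes this automatic because the sphere sits inside an affine subspace on which the ambient metric restricts to the Euclidean metric.
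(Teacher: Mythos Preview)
Your proposal is correct and follows essentially the same approach as the paper: for $m=1$ the paper also reduces to the small fiber lemma by showing that the union of small fibers is contained in the union of two balls of radius $M$, and for $m>1$ the paper's example $f(x)=(x_1,\sqrt{x_2^2+\cdots+x_n^2},0,\ldots,0)$ is a cosmetic variant of yours (square root instead of square, zero-padding instead of the identity on the first $m-1$ coordinates). The only superficial difference is that the paper argues the $m=1$ case positively (two balls suffice) rather than by contradiction, but the content is the same.
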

\begin{proof}
We begin with the case where $m=1$.  Recall that a fiber is small if its diameter is less than $M$.  If the union of all small fibers is contained in an open ball of radius $M$, then of course the union of small fibers is bounded.  If the union of all small fibers is not contained in an open ball of radius $M$, then there must exist points $a,b$ such that both belong to small fibers and such that $d(a,b)\geq M$.  It then follows from the small fiber lemma that for any point $x$ belonging to any small fiber, either $d(x,a)<M$ or else $d(x,b)<M$.  The set of all such points is of course bounded.

When $m>1$, a simple example shows that the union of small fibers of $f$ can be unbounded.  Consider the continuous map $f : \mathbb{R}^n \rightarrow \mathbb{R}^m$ given by:
\begin{equation}
f(x_1,x_2, \ldots, x_n) = (x_1, \sqrt{\Sigma_{i=2}^{n} {x_i^2}}, 0, 0, \ldots, 0).
\end{equation}
Note that fibers of $f$ are ($n-2$)-spheres with centers along the $x_1$ axis.  The union of all small fibers is the set of points whose distance to the $x_1$ axis is less than $M/2$.
\end{proof}

Although the union of small fibers is bounded when $m=1$, two small fibers can be located arbitrarily far apart.  Consider for example the Urysohn-like function $f : \mathbb{R}^n \rightarrow \mathbb{R}$, for $n>1$, given by
\begin{equation}
f(x) = \frac{d(x,a)^2}{d(x,a)^2 +d(x,b)^2},
\label{Urysohn-equation}
\end{equation}
for distinct points $a$ and $b$; an example in the case $n=2$ is illustrated in Figure \ref{threesets}.  With one exception, all fibers of $f$ are ($n-1$)-spheres whose centers lie on the line that passes through $a$ and $b$, but not on the segment $\overline{ab}$.  An additional fiber is the perpendicular bisector of $\overline{ab}$, a hyperplane that becomes a sphere when the point at infinity is adjoined.  The spherical fibers are smallest when their centers are closest to $\overline{ab}$, and grow as their centers move away from it.  The distance between the sets of small fibers can be made arbitrarily large by moving $a$ and $b$ arbitrarily far apart.

\begin{figure}[h]
\begin{center}
\includegraphics[width=0.55\linewidth]{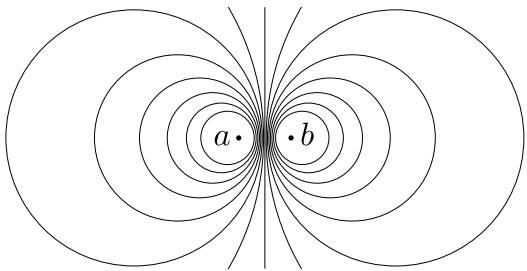}
\caption{Circular fibers of a continuous function $f : \mathbb{R}^2 \rightarrow \mathbb{R}$ given by (\ref{Urysohn-equation}). All small fibers are contained in two regions centered near $a$ and $b$; these regions can be located arbitrarily far apart.}
\label{threesets}
\end{center}
\end{figure}

\section*{Small Fibers and Boundedness}

Aside from illustrating that small fibers can be located arbitrarily far apart, Figure \ref{threesets} also highlights a general property of continuous real-valued functions with small fibers.  In particular, we can use an approach similar to that used in proving the small fiber lemma to show the following.
\begin{theorem}
Let $f : \mathbb{R}^n \rightarrow \mathbb{R}$ be a continuous function where $n>1$.  If $f$ has a small fiber, then $f$ is bounded from above or from below.  Further, if $a,b \in \mathbb{R}^n$ both belong to small fibers and $d(a,b)\geq M$, then $f$ is bounded from above and below. 
\label{third theorem}
\end{theorem}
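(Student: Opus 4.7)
The plan is to argue by contradiction using the same "path avoiding a ball" trick that drove the small fiber lemma. Fix a small fiber $F_0 = f^{-1}(y_0)$ and any point $p \in F_0$. Since $\operatorname{diam}(F_0) < M$, every point of $F_0$ satisfies $d(\cdot, p) < M$, so producing a point of $F_0$ with $d(\cdot, p) \geq M$ is an immediate contradiction; this is the contradiction I will aim at in both parts.

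For the first assertion, suppose $f$ were unbounded both above and below. Continuity on the compact set $\overline{B(p, M)}$ bounds $f$ there, so we can choose $q_+, q_-$ in the complement $\{x : d(x, p) > M\}$ with $f(q_-) < y_0 < f(q_+)$. Since $n > 1$, this complement is homeomorphic to $S^{n-1} \times (M, \infty)$ and hence path-connected; taking any path from $q_-$ to $q_+$ in it and applying the intermediate value theorem yields a point $x$ on the path with $f(x) = y_0$. Then $x \in F_0$ yet $d(x, p) > M$, a contradiction.

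For the second assertion, assume $a, b$ lie in small fibers $F_a, F_b$ with $d(a, b) \geq M$. Then $f(a) \neq f(b)$, for otherwise $a$ and $b$ would lie in a common fiber of diameter at least $M$; without loss of generality $f(a) < f(b)$. To forbid $f$ being unbounded above, I rerun the argument using $B(b, M)$ in place of $B(p, M)$: the point $a$ already satisfies $d(a, b) \geq M$, any $q_+$ with $f(q_+) > f(b)$ can be chosen outside $\overline{B(b, M)}$, and a path from $a$ to $q_+$ in $\{x : d(x, b) \geq M\}$ together with the intermediate value theorem produces a point of $F_b$ at distance at least $M$ from $b$. Swapping the roles of $a$ and $b$ rules out $f$ being unbounded below.

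The only subtlety I expect is the boundary case $d(a, b) = M$, where $a$ sits on $\partial B(b, M)$ and so lies in neither the open ball nor its open complement. The fix is simply to work throughout with the closed set $\{x : d(x, b) \geq M\}$, which is still homeomorphic to $S^{n-1} \times [M, \infty)$ and path-connected for $n > 1$, since a contradiction is reached as soon as one finds a fiber point at distance $\geq M$ from the chosen center.
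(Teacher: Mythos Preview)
Your argument is correct and follows essentially the same route as the paper: a contradiction via the intermediate value theorem applied along a path in the path-connected complement of a closed ball of radius $M$ about a chosen center. Your explicit handling of the boundary case $d(a,b)=M$ by working in the closed set $\{x:d(x,b)\ge M\}\cong S^{n-1}\times[M,\infty)$ matches the paper's choice as well.
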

\begin{proof}
If $f$ has a small fiber $f^{-1}(y)$, then this fiber is contained in some closed ball $B$ of radius $M$; since $B$ is closed, $f$ is bounded on it.  If $f$ is unbounded from above and from below, then there exist points $a, b \not\in B$ such that $f(a) < y < f(b)$.  As in the proof of the small fiber lemma, since the complement of $B$ in $\mathbb{R}^n$ is path-connected, a curve can be drawn in it from $a$ to $b$.  By the intermediate value theorem there exists a point $x$ on that curve such that $y = f(x)$, and so the fiber over $y$ is not small, contradicting our assumption.

The second part of the theorem can be proven in a similar manner.  If $a,b \in \mathbb{R}^n$ both belong to small fibers and $d(a,b)\geq M$, then $f(a)$ and $f(b)$ must be distinct; without loss of generality, suppose $f(a) < f(b)$.  Consider the closed ball $B$ of radius $M$ centered at $b$.  Since $B$ is closed, $f$ is bounded on it.  If $f$ is unbounded from above, then there exists a point $c \not\in B$ such that $f(a) < f(b) < f(c)$.  As above, a curve can be drawn from $a$ to $c$ such that the distance from $b$ to every point along the curve is at least $M$.  By the intermediate value theorem, there exists a point $x$ along that curve such that $f(x)=f(b)$.  Since $d(x,b)\geq M$, $b$ cannot belong to a small fiber, contradicting our assumption and showing that $f$ must be bounded from above. An analogous argument shows that $f$ is bounded from below.
\end{proof}

Note that discontinuous functions, even those that are bounded, integrable, and decay to zero, need not have fibers of arbitrarily large diameter.  Consider, for example, the following function $f : \mathbb{R}^2 \rightarrow \mathbb{R}$:
\begin{align}
f(x,y) &= 
  \begin{cases}
    \frac{1}{2}^{\lfloor x \rfloor}\frac{1}{3}^{\lfloor y \rfloor} & x\geq 0, y\geq 0\\
    \frac{1}{5}^{- \lfloor x \rfloor}\frac{1}{7}^{\lfloor y \rfloor} & x < 0, y\geq 0\\
    \frac{1}{11}^{\lfloor x \rfloor}\frac{1}{13}^{-\lfloor y \rfloor} & x\geq 0, y < 0\\
    \frac{1}{17}^{-\lfloor x \rfloor}\frac{1}{19}^{-\lfloor y \rfloor} & x < 0, y < 0.
  \end{cases}
\label{Lp}
\end{align}
Here $f$ takes a unique rational value on each unit square in the plane, and so every nonempty fiber of $f$ has diameter $\sqrt{2}$.  Note that $f$ tends to 0 as $x^2+y^2$ tends to $\infty$.  Moreover, $f$ is in $L^1$ and $L^{\infty}$, and hence in $L^p$ for all $1 \leq p \leq \infty$.  This example can be generalized for arbitrary $n>m$, with a suitable choice of prime numbers.

\section*{Conclusions}

The analysis above provides a cautionary tale for data science analysts.  The use of continuous maps to reduce the dimension of point sets in high-dimensional Euclidean spaces entails what we might call the ``curse of continuity'' -- there will exist points arbitrarily far apart that are identified under such maps.  Not only will knowledge of $f(x)$ not allow us to recover $x$ exactly, but we will generally be unable to determine $x$ to within any finite error.  Under suitable restriction of the domain this issue might be avoided, but knowledge of such restrictions is not always available a priori.  

In contrast, discontinuous mappings suffer no such inherent limitations.  Equation (\ref{Lp}), for example, can be scaled such that its fibers are $n$-dimensional cubes of edge length $\epsilon$.  The diameter of each fiber is then precisely $\epsilon\sqrt{n}$.  Knowledge of $f(x)$ then allows us to determine $x$ to within a maximal error $\epsilon\sqrt{n}$.  

An application highlighting some limitations of continuous maps in analyzing structure in large point sets can be found in \cite{lazar2015}.  In computational materials science research, continuous ``order parameter'' mappings are often constructed to summarize structural information near each particle in a system of particles.  This order parameter is subsequently used to identify larger-scale structural features of the system.  The continuity of the order-parameter entails that points arbitrarily far apart in a relevant configuration space will map to the same order-parameter value.  Consequently, continuous order-parameters regularly fail to distinguish structurally distinct configurations of points, making automated analysis difficult or impossible.  In that paper, the authors suggest a discrete order-parameter, based on Voronoi cell topology, which largely avoids this degeneracy.

We note that Theorem \ref{first theorem} can be obtained as a consequence of Corollary 0.3 in \cite{calegari2000degree}, though the proof here is simpler.  Also, a similar result for proper mappings can be found as a consequence of an exercise presented at the end of Section 3.3 in \cite{bowditch2006course}.  Finally, Theorem \ref{first theorem} can also be obtained as a simple corollary of what Larry Guth has called the ``Large fiber lemma'' \cite[Section 7.6]{burago2014few}, \cite[Section 6]{guth2010metaphors}, itself a corollary of the Lebesgue covering lemma which is used in topological dimension theory.  
\begin{namedtheorem}[Large fiber lemma]
If $f : [0,1]^n \rightarrow \mathbb{R}^m$ is a continuous map where $n>m$, then one of the fibers of $f$ has diameter at least 1.  
\label{largefiberlemma}
\end{namedtheorem}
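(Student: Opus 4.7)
The plan is to argue by contradiction using Lebesgue's classical covering theorem, which states that if $[0,1]^n$ is covered by finitely many closed sets none of which meets a pair of opposite faces of the cube, then some point of the cube lies in at least $n+1$ of the covering sets. Assuming every fiber $f^{-1}(y)$ has diameter strictly less than $1$, I would build a cover of $[0,1]^n$ by sets of diameter less than $1$ whose multiplicity is at most $m+1$. Since $m+1 < n+1$ this would contradict Lebesgue's theorem, noting that any set of diameter less than $1$ is too small to reach both of the faces $\{x_i=0\}$ and $\{x_i=1\}$, which sit at distance exactly $1$ apart.

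The first step constructs, for each $y \in f([0,1]^n)$, an open neighborhood $V_y \subset \mathbb{R}^m$ whose preimage has diameter less than $1$. Because $f^{-1}(y)$ is compact with diameter less than $1$, it sits inside a sufficiently small open neighborhood $U_y \subset [0,1]^n$ that still has diameter less than $1$ (for instance a small $\varepsilon$-thickening). Then $[0,1]^n \setminus U_y$ is compact and disjoint from $f^{-1}(y)$, so its image under $f$ is compact in $\mathbb{R}^m$ and avoids $y$. Defining $V_y := \mathbb{R}^m \setminus f([0,1]^n \setminus U_y)$ gives an open neighborhood of $y$ whose preimage lies inside $U_y$, hence has diameter less than $1$.

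The second step replaces the cover $\{V_y\}$ of the compact image $f([0,1]^n)$ by a finite refinement of controlled multiplicity. After extracting a finite subcover, I would use that $\mathbb{R}^m$ has covering dimension $m$: via a fine, slightly staggered cubical tiling of $\mathbb{R}^m$, any finite open cover of a subset of $\mathbb{R}^m$ admits a finite open refinement $\{W_1,\ldots,W_k\}$ of multiplicity at most $m+1$. Pulling back through $f$ yields a finite open cover $\{f^{-1}(W_i)\}$ of $[0,1]^n$ of multiplicity at most $m+1$, with each set having diameter less than $1$ since each $W_i$ sits inside some $V_y$.

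Finally, replacing this open cover by a closed shrinking with the same multiplicity bound and diameters still below $1$, Lebesgue's theorem applies and forces some point of $[0,1]^n$ to lie in at least $n+1$ of the sets, contradicting the multiplicity bound $m+1$. The main obstacle is the second step: transferring the smallness of individual fibers into a uniform neighborhood control across all of $f([0,1]^n)$, and then compressing the resulting cover down to multiplicity $m+1$. The neighborhood control is handled by compactness in the first step, while the multiplicity reduction rests on the standard but nontrivial fact that $\mathbb{R}^m$ has covering dimension $m$; these are exactly the two ingredients the Lebesgue covering lemma is designed to exploit.
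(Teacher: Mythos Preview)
Your argument is correct and is precisely the classical route the paper alludes to when it says the Large fiber lemma ``is itself a corollary of the Lebesgue covering lemma which is used in topological dimension theory.'' The paper, however, does not actually write out this proof; it merely cites the lemma from the literature and then points to a much shorter alternative: apply the Borsuk--Ulam technique of Theorem~1 to the inscribed $(n-1)$-sphere of diameter~$1$ inside $[0,1]^n$. Since $n-1\ge m$, Borsuk--Ulam yields antipodal points $x,-x$ on that sphere with $f(x)=f(-x)$, and these are at distance exactly~$1$, producing the desired fiber in one line. Your approach is more self-contained in that it avoids Borsuk--Ulam, but it trades that for the nontrivial fact that $\mathbb{R}^m$ has covering dimension~$m$ and for Lebesgue's theorem on covers of the cube; the paper's suggested route packages all of the topology into a single invocation of Borsuk--Ulam and, as the paper notes, even sharpens the conclusion (the domain can be shrunk to the sphere, and the bound~$1$ is attained).
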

The technique used in the proof of Theorem \ref{first theorem} can also be used to strengthen this lemma in two ways.  First, we can replace the unit $n$-cube with the inscribed $(n-1)$-sphere of unit diameter.  Second, the inequality of the conclusion becomes an equality.  

\section*{Questions}
This paper is concerned with fibers in $\mathbb{R}^n$ equipped with the standard Euclidean metric.  It is easily shown that Theorem \ref{first theorem} can fail when $\mathbb{R}^n$ is equipped with other metrics.  For example, we can map $\mathbb{R}^2$ homeomorphically to the cylinder capped by a half-sphere on one end and unbounded on the other.  If the origin is mapped to the center of the spherical cap, then fibers of the distance function to the origin are circles whose diameters are all bounded by that of the cylinder.  Thus, one might wonder about necessary and sufficient conditions on the metric for Theorem \ref{first theorem} to hold.

In this paper, we proved that the union of small fibers can have unbounded diameter when $m>1$.  However, one might ask if there must exist a line through the origin in $\mathbb{R}^n$ onto which the projection of this set is bounded.  If so, is there a lower bound on the number of such orthogonal lines, depending on $n$ and $m$?

\section*{Acknowledgements}  The authors would like to thank the anonymous referee for numerous helpful remarks on an earlier draft.

\end{document}